\numberwithin{equation}{section}
\newtheorem{thm}{Theorem}
\newtheorem{prop}[thm]{Proposition}
\newtheorem{coro}[thm]{Corollary}
\newtheorem{Prob}[thm]{Problem}
\newtheorem{conj}[thm]{Conjecture}
\def\Z{\mathbb{Z}}
\def\N{\mathbb{N}}
\title[Two  new triangles of $q$-integers] {Two  new triangles of $q$-integers via $q$-Eulerian polynomials of type $A$ and $B$}
\author{Guoniu Han}
\address{Institut de Recherche Math\'ematique Avanc\'ee,
Universit\'e de Strasbourg et CNRS,
7, rue Ren\'e-Descartes, F-67084 Strasbourg, France.}
\author{Fr\'ed\'eric Jouhet}
\address{Universit\'e de Lyon, Universit\'e Lyon I, 
CNRS, UMR 5208 Institut Camille Jordan,
43, bd du 11 Novembre 1918, 69622 Villeurbanne Cedex, France.}
\author{Jiang Zeng}
\address{Universit\'e de Lyon, Universit\'e Lyon I, 
CNRS, UMR 5208 Institut Camille Jordan,
43, bd du 11 Novembre 1918, 69622 Villeurbanne Cedex, France.}
\dedicatory{Dedicated to Mourad Ismail and Dennis Stanton}
\date{\today}  
\begin{document}

\begin{abstract}
The classical Eulerian polynomials can be expanded in the basis 
 $t^{k-1}(1+t)^{n+1-2k}$ ($1\leq k\leq\lfloor (n+1)/2\rfloor$) with positive integral  coefficients. This formula implies 
  both the symmetry and the unimodality of the Eulerian polynomials. 
In this paper,  we prove a $q$-analogue of this expansion 
for  Carlitz's $q$-Eulerian polynomials as well as a similar formula for Chow-Gessel's $q$-Eulerian polynomials of type $B$.
We shall  give  some applications of these two formulae, which   involve two new sequences of polynomials in the variable $q$ with positive integral  coefficients.   An open problem is to give a combinatorial interpretation for these polynomials.
\end{abstract} 

\maketitle 

\section{Introduction}
The \emph{Eulerian polynomials} $A_n(t):=\sum_{k=1}^{n}A_{n,k}t^{k-1}$ (see \cite{FS70, Fo09,St97})  may be defined by
$$
\sum_{k\geq 1}k^n t^k=\frac{A_n(t)}{(1-t)^{n+1}}\qquad (n\in \N).
$$
It is well known (see \cite{FS70, SGW83}) that there are nonnegative integers $a_{n,k}$ such that 
\begin{align}\label{eq:FS}
A_{n}(t)=\sum_{k=1}^{\lfloor (n+1)/2\rfloor}a_{n,k}t^{k-1}(1+t)^{n+1-2k}.
\end{align}
For example, for $n=1,\ldots,4$, the identity reads 
 \begin{align*}
 A_1(t)=1,\quad A_2(t)=1+t,\quad 
A_3(t)=(1+t)^{2}+2 t^2,\quad
A_4(t) =(1+t)^3 + 8 t(1+t).
  \end{align*}
 In particular, this formula implies both the \emph{symmetry} and the \emph{unimodality} (see for instance \cite{Br08} for the definitions) of the Eulerian numbers $(A_{n,k})_{1\leq k\leq n}$ for any fixed $n$.  The coefficients $a_{n,k}$ 
 defined by (\ref{eq:FS}) satisfy the following recurrence relation:
\begin{equation} \label{eq:recu1}
a_{n,k}= ka_{n-1,k}+2(n+2-2k)a_{n-1,k-1} 
 \end{equation}
for $n\geq 2$ and $1\leq k \leq \lfloor (n+1)/2\rfloor$,  with $a_{1,1}= 1$, and $a_{n,k}=0$ for $k\leq 0$ or $k > \lfloor (n+1)/2\rfloor$.



$$
\begin{array}{c|cccc}
\hbox{$n$}\backslash\hbox{$k$}&1&2&3&4\\
\hline
1& 1&\\
2& 1&\\
3& 1&2&&\\
4& 1&8&&\\
5& 1&22&16&\\
6& 1&52&136&\\
\end{array}
\qquad\qquad\qquad
\begin{array}{c|cccc}
\hbox{$n$}\backslash\hbox{$k$}&0&1&2&3\\
\hline
1& 1&\\
2& 1&4\\
3& 1&20&&\\
4& 1&72&80&\\
5& 1&232&976&\\
6& 1&716&766&3904\\
\end{array}
$$
\centerline{Table 1. The first values of $(a_{n,k})$ and $(b_{n,k})$}
\smallskip

The classical Eulerian polynomials are the descent polynomials of the symmetric group or \emph{Coxeter group of type $A$}.
Analogues of   Eulerian polynomials for other Coxeter groups  were introduced and studied from a combinatorial point of view in the last three decades.
For instance,  the Eulerian polynomials $B_n(t)$  of type $B$ are defined by 
\begin{align}
\sum_{n\geq 0} (2k+1)^n t^n =\frac{B_n(t)}{(1-t)^{n+1}}.
\end{align}
The type $B$  version of \eqref{eq:FS} appeared
quite recently (see \cite{Pe07, St08, Chow08}) and  reads as follows
\begin{align}\label{eq:FSB} 
B_n(t)=\sum_{k=0}^{\lfloor n/2\rfloor}b_{n,k} t^k (1+t)^{n-2k},
\end{align}
where $b_{n,k}$ are  positive integers  satisfying the recurrence relation
\begin{align}
b_{n,k}=(2k+1)b_{n-1,k}+4(n+1-2k)b_{n-1,k-1},
\end{align}
for $n\geq 2$ and $0\leq k \leq \lfloor n/2\rfloor$,  with $b_{1,0}= 1$, and $b_{n,k}=0$ for $k\leq 0$ or $k > \lfloor n/2\rfloor$.

The  numbers  $a_{n,k}$ and $4^{-k}b_{n,k}$  appear as $A101280$  and $A008971$ in  The \emph{On-Line Encyclopedia of Integer Sequences}~:
 http://oeis.org.  
 
The aim of  this paper  is to prove a $q$-analogue of  \eqref{eq:FS}  with a refinement of  the triangle 
$(a_{n,k})$ for Carlitz's $q$-Eulerian polynomials~\cite{Ca75}, and also  a $q$-analogue of   \eqref{eq:FSB} with a refinement of  the triangle $(b_{n,k})$
for  Chow-Gessel's  $q$-Eulerian polynomials of type $B$ \cite{CG07}.  Note that some other extensions  of \eqref{eq:FS} are discussed in
 \cite{Br08,SW10,SZ10}.

\medskip

This paper is organized as follows: we derive in Section~2 a $q$-analogue of \eqref{eq:FS} using  Carlitz's $q$-Eulerian polynomials 
and   derive some results  about  the $q$-tangent number $T_{2n+1}(q)$  studied  in \cite{FH09}. In Section~3, we give a $q$-analogue of 
\eqref{eq:FSB} using  Chow-Gessel's $q$-Eulerian polynomials of type $B$, which yields new $q$-analogues of the secant numbers. 
 In Section~4, we apply our constructions to some
 conjectures  on the unimodality  from \cite{CG07}. Finally, we will briefly give some concluding remarks in the fifth and last section.

\section{A $q$-analogue for type $A$}
The $q$-binomial coefficient ${n\brack k }_q$ is defined by
$$
{n\brack k }_q=\frac{(q;q)_n}{(q;q)_k(q;q)_{n-k}},\qquad n\geq k\geq 0,
$$
where $(x;q)_n=(1-x)(1-xq)\cdots (1-xq^{n-1})$  and  $(x;q)_0=1$. 
Recall \cite{Ca54}  that 
Carlitz's  $q$-Eulerian polynomials $A_{n}(t,q):=\sum_{k=1}^{n-1}A_{n,k}(q)t^{k}$ can be defined 
by 
 \begin{align}\label{eq:carlitz}
 \sum_{k\geq 0} [k+1]_q^n t^k=\frac{A_{n}(t,q)}{(t;q)_{n+1}},
 \end{align}
 where $[n]_q=1+q+\cdots +q^{n-1}$.
 It is easy to see  that $A_{n,k}(q)$ satisfy the recurrence:
\begin{align}\label{eq: recurrence}
A_{n,k}(q)=[k]_{q}A_{n-1, k}(q)+q^{k-1}[n+1-k]_{q} A_{n-1, k-1}(q) \qquad  (1\leq k\leq n).
\end{align}

The following is our $q$-analogue of \eqref{eq:FS}.
\begin{thm}\label{th:A}
For any positive integer $n$, there are polynomials $a_{n,k}(q)\in\mathbb{N}[q]$ such that 
 the $q$-Eulerian polynomials  $ A_{n}(t,q)$ can be written as follows: 
 \begin{align}\label{eq:Chapo}
 A_{n}(t,q) =\sum_{k=1}^{\lfloor (n+1)/2\rfloor}a_{n,k}(q)t^{k-1}(-tq^{k};q)_{n+1-2k}.
 \end{align}
Moreover, the polynomials $a_{n,k}(q)$ satisfy the following recurrence relation
  \begin{align}\label{eq:Han}
 a_{n,k}(q)=[k]_{q}a_{n-1,k}(q)+(1+q^{k-1})q^{k-1}[n+2-2k]_{q}a_{n-1,k-1}(q)
\end{align}
for $n\geq 2$ and $1\leq k \leq \lfloor (n+1)/2\rfloor$, with
$a_{1,1}(q)=1$ and $a_{n,k}(q)=0$ 
for $k\leq 0$ or $k > \lfloor (n+1)/2\rfloor$. 
\end{thm}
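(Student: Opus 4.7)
My plan is to argue by induction on $n$, using a first-order operator form of Carlitz's recurrence \eqref{eq: recurrence}. The preliminary step is to rewrite \eqref{eq: recurrence} as an identity for the polynomial $A_n(t,q)$ itself. Using the elementary identity $q^{k-1}[n+1-k]_q=[n]_q-[k-1]_q$ and collecting powers of $t$, one obtains
\begin{equation*}
A_n(t,q) \,=\, \bigl(1+q[n-1]_q\, t\bigr)\, A_{n-1}(t,q) \,+\, qt(1-t)\, D_q A_{n-1}(t,q),
\end{equation*}
where $D_q f(t):=(f(t)-f(qt))/((1-q)t)$ is the standard $q$-derivative (satisfying $D_qt^k=[k]_qt^{k-1}$). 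This is a natural $q$-analogue of the classical recurrence $A_n(t)=(1+(n-1)t)A_{n-1}(t)+t(1-t)A_{n-1}'(t)$.

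The inductive step then reduces, by linearity, to proving a single identity on the proposed basis $P_{n,k}(t):=t^{k-1}(-tq^k;q)_{n+1-2k}$:
\begin{equation*}
\bigl(1+q[n-1]_q t\bigr)\, P_{n-1,j}(t) \,+\, qt(1-t)\, D_q P_{n-1,j}(t) \,=\, [j]_q\, P_{n,j}(t) \,+\, q^j(1+q^j)[n-2j]_q\, P_{n,j+1}(t).
\end{equation*}
Granted this, one applies the operator term-by-term to $A_{n-1}(t,q)=\sum_j a_{n-1,j}(q)P_{n-1,j}(t)$ (inductive hypothesis), sums, and reindexes $k=j+1$ in the second summand; the coefficient of $P_{n,k}(t)$ is then precisely the right-hand side of \eqref{eq:Han}, which one may take as the definition of $a_{n,k}(q)$. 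The factor $[n-2j]_q$ in the second summand automatically zeros out the would-be out-of-range term at $j=n/2$ in the even case.

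The main (still elementary) computation is the verification of the basis identity. The ingredients are the $q$-Leibniz rule $D_q(fg)(t)=D_qf(t)\cdot g(t)+f(qt)\cdot D_qg(t)$ applied to the two factors of $P_{n-1,j}$, together with the easy calculation $D_q(-tq^j;q)_m=q^j[m]_q(-tq^{j+1};q)_{m-1}$. After expansion, the terms split into those still containing $(-tq^j;q)_{n-2j}$ and those already containing $(-tq^{j+1};q)_{n-2j-1}$. Within the first group, the scalar factor simplifies, via $1+q[j-1]_q=[j]_q$ and $q[n-1]_q-q[j-1]_q=q^j[n-j]_q$, to $[j]_q+q^j[n-j]_q t$. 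The factorization $(-tq^j;q)_{n+1-2j}=(-tq^j;q)_{n-2j}(1+tq^{n-j})$ then lets me extract exactly $[j]_q P_{n,j}(t)$, leaving a residual $q^j[n-2j]_q\, t^j(1+tq^j)(-tq^{j+1};q)_{n-2j-1}$. This residual merges with the second group through the compact cancellation $(1+tq^j)+q^j(1-t)=1+q^j$, producing the expected $q^j(1+q^j)[n-2j]_q\, P_{n,j+1}(t)$. The only substantive obstacle is bookkeeping: making sure to regroup the six or so expanded pieces correctly before the final cancellation.

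Finally, positivity $a_{n,k}(q)\in\N[q]$ follows immediately from \eqref{eq:Han} by induction on $n$, since both $[k]_q$ and $(1+q^{k-1})q^{k-1}[n+2-2k]_q$ lie in $\N[q]$ and $a_{1,1}(q)=1$.
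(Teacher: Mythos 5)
Your proposal is correct, but it proves the theorem by a genuinely different mechanism than the paper. The paper never leaves the coefficient level: it expands each factor $(-tq^{k};q)_{n+1-2k}$ by the $q$-binomial theorem to convert \eqref{eq:Chapo} into the explicit coefficient identity \eqref{eq:qFSbis}, substitutes that into Carlitz's recurrence \eqref{eq: recurrence}, and reduces everything to a single identity among $q$-binomial coefficients, which after cancellation becomes a four-term identity of $q$-integers checked directly. You instead work at the level of the polynomials themselves: you repackage \eqref{eq: recurrence} as the first-order $q$-difference equation $A_n(t,q)=(1+q[n-1]_q t)A_{n-1}(t,q)+qt(1-t)D_qA_{n-1}(t,q)$ (which I verified is equivalent to \eqref{eq: recurrence} under the convention $A_n(t,q)=\sum_k A_{n,k}(q)t^{k-1}$ that the paper actually uses, its displayed definition notwithstanding), and then show that the operator acts triangularly on the basis $P_{n-1,j}\mapsto [j]_qP_{n,j}+q^j(1+q^j)[n-2j]_qP_{n,j+1}$; I checked this basis identity and the boundary behaviour (the factor $[n-2j]_q$ killing the out-of-range term for even $n$), and both are right. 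What each approach buys: the paper's computation is self-contained and mechanical but somewhat opaque, ending in an unmotivated $q$-integer identity; yours makes visible the $q$-analogue of the classical derivative recurrence for $A_n(t)$ and explains structurally why the basis $t^{k-1}(-tq^k;q)_{n+1-2k}$ is the right one, at the cost of a slightly heavier bookkeeping step (the $q$-Leibniz expansion and regrouping), which you carry out correctly. Either argument establishes \eqref{eq:Chapo} and \eqref{eq:Han}, and the positivity claim is immediate from \eqref{eq:Han} in both.
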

\begin{proof}
 Assume that $a_{n,k}(q)$ are coefficients satisfying \eqref{eq:Han}. Then, by the $q$-binomial formula (cf.  \cite[Theorem 3.3]{An98}),
 \begin{align}\label{eq:qbinomial}
 (z;q)_{N}=\sum_{j=0}^{N}{N\brack j}_{q} (-z)^{j} q^{j(j-1)/2},
 \end{align}
 we see that \eqref{eq:Chapo} is equivalent to:
 \begin{align}\label{eq:qFSbis}
 A_{n,k}(q)=\sum_{s\geq 1} {n+1-2s\brack k-s}_{q} q^{(k-s)s+{k-s\choose 2}} a_{n,s}(q).
\end{align}
Substituting  \eqref{eq:qFSbis} in \eqref{eq: recurrence}, and using \eqref{eq:Han}, we derive:
\begin{align*}
&\sum_{s\geq 1}{n+1-2s\brack k-s}_{q}q^{(k-s)s+{k-s\choose 2}}\left([s]_{q}a_{n-1,s}(q)+(1+q^{s-1})q^{s-1}[n+2-2s]_{q}a_{n-1, s-1}(q)\right)\\
&=\sum_{s\geq 1}q^{(k-s)s+{k-s\choose 2}}\left([k]_{q} {n-2s\brack k-s}_{q}+
[n+1-k]_{q}{n-2s\brack k-1-s}_{q}\right)a_{n-1,s}(q).
\end{align*}
Extracting the coefficents of $a_{n-1,s}(q)$ we obtain:
\begin{align*}
{n+1-2s\brack k-s}_{q}[s]_{q} &+{n-1-2s\brack k-s-1}_{q}(1+q^{s})[n-2s]_{q}\\
&=[k]_{q} {n-2s\brack k-s}_{q}+ [n+1-k]_{q} {n-2s\brack k-1-s}_{q}.
\end{align*}
Canceling the common factors we get:
$$
[n+1-2s]_{q}[s]_{q}+[n-k-s+1]_{q}(1+q^{s})[k-s]_{q}=
[k]_{q}[n-k-s+1]_{q}+[n+1-k]_{q}[k-s]_{q}.
$$
The last identity is easy to verify, and this shows that \eqref{eq:Chapo} is satisfied.
 \end{proof}
The first values of the coefficients $a_{n,k}(q)$ read as follows:
$$\vcenter{\hbox{$\begin{array}{c|cccc}
\hbox{$n$}\backslash\hbox{$k$}&1&2&3&\\
\hline
 1&1&\\
 2&1&\\
 3&1&q+q^{2}&&\\
 4&1&2q(1+q)^{2}&&\\
 5&1&q(1+q)(3+5q+3q^2)&2q^3(1+q)^2(1+q^2)  &\\
 6& 1&q(1+q)^2 (4+5q+4q^2) &q^3(1+q)^2(1+q^2)(5+7q+5q^2) &\\
\end{array}$}
}
$$

\medskip

In \cite{FH09} Foata and Han defined a 
new  sequence of $q$-tangent numbers 
$T_{2n+1}(q)$  by
\begin{align}\label{eq:defT}
T_{2n+1}(q) = (-1)^n q^{\binom n2} A_{2n+1}(-q^{-n}, q).
\end{align}

We derive easily the following result from Theorem~\ref{th:A}, which is  the  most difficult part of the main result in \cite[Theorem 1.1]{FH09}.

\begin{coro}  The $q$-tangent number $T_{2n+1} (q)$ is a polynomial with positive integral coefficients. 
\end{coro}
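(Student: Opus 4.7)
The plan is to evaluate $A_{2n+1}(-q^{-n},q)$ using the expansion \eqref{eq:Chapo} of Theorem \ref{th:A} and to observe that almost every term vanishes. Applying \eqref{eq:Chapo} with $n$ replaced by $2n+1$ and substituting $t=-q^{-n}$ gives
\begin{align*}
A_{2n+1}(-q^{-n},q)=\sum_{k=1}^{n+1}a_{2n+1,k}(q)(-q^{-n})^{k-1}(q^{k-n};q)_{2n+2-2k}.
\end{align*}
The key point is that for each $k$ with $1\le k\le n$, the exponents appearing in the $q$-Pochhammer symbol $(q^{k-n};q)_{2n+2-2k}$ run from $k-n\le 0$ up to $n+1-k\ge 0$, so a factor $1-q^{0}=0$ is present. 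Hence only the term $k=n+1$ survives; since the corresponding Pochhammer is then the empty product, one obtains
\begin{align*}
A_{2n+1}(-q^{-n},q)=(-q^{-n})^{n}\,a_{2n+1,n+1}(q)=(-1)^{n}q^{-n^{2}}\,a_{2n+1,n+1}(q).
\end{align*}
Plugging this into \eqref{eq:defT} and using $\binom{n}{2}-n^{2}=-\binom{n+1}{2}$ produces the compact formula
\begin{align*}
T_{2n+1}(q)=q^{-\binom{n+1}{2}}\,a_{2n+1,n+1}(q).
\end{align*}

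It then suffices to show that $q^{\binom{n+1}{2}}$ divides $a_{2n+1,n+1}(q)$, since Theorem \ref{th:A} already guarantees $a_{n,k}(q)\in\mathbb{N}[q]$, so the resulting quotient will automatically have nonnegative integer coefficients. I would establish the stronger divisibility statement $q^{\binom{k}{2}}\mid a_{n,k}(q)$ for all admissible pairs $(n,k)$ by induction on $n$ using the recurrence \eqref{eq:Han}. The base case $a_{1,1}(q)=1$ is trivial since $\binom{1}{2}=0$. For the inductive step, the first term $[k]_{q}\,a_{n-1,k}(q)$ of \eqref{eq:Han} is divisible by $q^{\binom{k}{2}}$ by the inductive hypothesis, while in the second term the explicit prefactor $q^{k-1}$ combined with $q^{\binom{k-1}{2}}\mid a_{n-1,k-1}(q)$ supplies the missing exponent, since $\binom{k-1}{2}+(k-1)=\binom{k}{2}$. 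Specializing to $(n,k)=(2n+1,n+1)$ then completes the argument.

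The only real subtlety is the vanishing step in the first paragraph, which requires keeping careful track of the exact range of exponents inside the Pochhammer symbol and treating the exceptional case $k=n+1$ separately; once that is in place, everything else reduces to clean bookkeeping with the recurrence \eqref{eq:Han} and the definition \eqref{eq:defT}.
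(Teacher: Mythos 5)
Your proof is correct and follows essentially the same route as the paper: the same vanishing argument isolates the $k=n+1$ term, and your divisibility claim $q^{\binom{k}{2}}\mid a_{n,k}(q)$ is exactly the paper's renormalization $a_{n,k}^*(q)=q^{-\binom{k}{2}}a_{n,k}(q)$, which the paper shows satisfies a positivity-preserving recurrence (the paper writes the exponent as $k(k+1)/2$, an apparent typo for $k(k-1)/2$, as your computation confirms). The only cosmetic difference is that you deduce nonnegativity of the quotient from $a_{n,k}(q)\in\mathbb{N}[q]$ plus divisibility, whereas the paper gets both at once from the recurrence for $a_{n,k}^*(q)$; both are valid.
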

\begin{proof} Let $a_{n,k}^*(q)=q^{-k(k+1)/2}a_{n,k}(q)$.  Then \eqref{eq:Han} becomes 
$$
a_{n,k}^*(q)=[k]_q a_{n-1,k}^*(q)+(1+q^{k-1}) [n+2-2k]_q a_{n-1,k-1}^*(q)
$$
with the same initial conditions as $a_{n,k}(q)$. This proves that $a_{n,k}^*(q)$ is a polynomial in $q$ with nonnegative integral  coefficients.
Now we show that $T_{2n+1} (q)= a_{2n+1, n+1}^*(q)$, which is sufficient to conclude.
Replacing $n$ by $2n+1$, $k$ by $n+1$, and $t$ by $-q^{-n}$ in (\ref{eq:Chapo}), we get
\begin{align*}
 A_{2n+1}(-q^{-n},q) 
	=\sum_{k=1}^{n+1}a_{2n+1,k}(q)(-q^{-n})^{k-1}(q^{k-n};q)_{2n+2-2k} 
	=a_{2n+1,n+1}(q)(-q^{-n})^{n}, 
 \end{align*}
since $(q^{k-n};q)_{2n+2-2k}=0$  for $k=1,2,\ldots, n$.  The result follows then from \eqref{eq:defT}.
\end{proof}

 We can also derive straightforwardly  the following result, which was proved in \cite{FH09} using combinatorics of the so-called \emph{doubloons}. 
\begin{coro}
The quotient $A_{2n}(t,q)/(1+tq^n)$  is a polynomial in $t$ and $q$  with 
{\it positive} integral coefficients.
\end{coro}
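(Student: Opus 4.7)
The plan is to exploit the explicit expansion given by Theorem~\ref{th:A}. Setting $n\mapsto 2n$ in \eqref{eq:Chapo}, we have $\lfloor (2n+1)/2\rfloor=n$, so
\begin{align*}
A_{2n}(t,q)=\sum_{k=1}^{n}a_{2n,k}(q)\,t^{k-1}(-tq^{k};q)_{2n+1-2k}.
\end{align*}
The key observation is that the $q$-Pochhammer factor expands as
\begin{align*}
(-tq^{k};q)_{2n+1-2k}=\prod_{j=k}^{2n-k}(1+tq^{j}),
\end{align*}
and for every index $k$ in the range $1\leq k\leq n$ we have $k\leq n\leq 2n-k$, so the middle factor $(1+tq^{n})$ always appears in the product. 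This makes $(1+tq^{n})$ a common factor of every summand.

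Dividing through, the quotient becomes
\begin{align*}
\frac{A_{2n}(t,q)}{1+tq^{n}}=\sum_{k=1}^{n}a_{2n,k}(q)\,t^{k-1}\prod_{\substack{k\leq j\leq 2n-k\\ j\neq n}}(1+tq^{j}).
\end{align*}
Since Theorem~\ref{th:A} guarantees that $a_{2n,k}(q)\in\mathbb{N}[q]$, and since each factor $(1+tq^{j})$ is a polynomial in $t$ and $q$ with nonnegative integer coefficients, the right-hand side is manifestly a polynomial in $\mathbb{N}[t,q]$. Positivity of the coefficients that actually appear is then immediate from the $k=1$ term, which contributes $\prod_{j=1,\,j\neq n}^{2n-1}(1+tq^{j})$ (all of whose monomials have positive coefficients) to the sum.

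The only step requiring verification is the index inequality $k\leq n\leq 2n-k$, which is elementary. The substantive content of the corollary is thus already packaged inside Theorem~\ref{th:A}: the nontrivial expansion with nonnegative coefficients $a_{2n,k}(q)$ immediately exposes the factor $1+tq^{n}$, whereas the combinatorial proof in \cite{FH09} via doubloons requires substantially more machinery. No real obstacle is expected beyond writing out the factorization cleanly.
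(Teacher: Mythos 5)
Your proof is correct and follows exactly the paper's own argument: specialize the expansion \eqref{eq:Chapo} to $n\mapsto 2n$, note that each factor $(-tq^{k};q)_{2n+1-2k}=(1+tq^{k})\cdots(1+tq^{2n-k})$ contains $1+tq^{n}$ because $k\leq n\leq 2n-k$, and conclude via the nonnegativity of the $a_{2n,k}(q)$ from Theorem~\ref{th:A}. The only difference is your added (harmless) remark isolating the $k=1$ term to justify strict positivity.
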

\begin{proof}
Note that 
$$
A_{2n}(t,q)=\sum_{k=1}^na_{2n,k}(q)t^{k-1}(-tq^k;q)_{2n+1-2k}.
$$
The result follows then from the fact that for $k=1,\ldots,n$, the coefficient
$(-tq^k;q)_{2n+1-2k}=(1+tq^k) \cdots (1+tq^{2n-k})$ contains the factor $1+tq^n$.
\end{proof}

For any  nonnegative  integer $n$,   set
\begin{align}\label{eq:f}
f_n(q):=\sum_{k=0}^{2n+1}{2n+1\choose k}\frac{(-1)^k}{1+q^{k-n}}.
\end{align}
Using the doubloon model, Foata-Han \cite{FH09} proved that 
$$
d_n(q):=\frac{T_{2n+1}(q)}{(1+q)(1+q^2)\dots(1+q^n)}=\frac{(-1)^{n+1}(-1;q)_{n+2}}{(1-q)^{2n+1}}f_n(q)$$
is a polynomial in $\N[q]$.  Actually  we can prove the integrality of   $d_n(q)$ without using the combinatorial device.

\begin{prop}\label{prop:T}
We have $d_n(q)\in\mathbb{Z}[q]$.
\end{prop}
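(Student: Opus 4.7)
The plan is to show that $R(q) := (-1;q)_{n+2}\, f_n(q)$ is an integer polynomial that is divisible by $(1-q)^{2n+1}$ in $\mathbb{Q}[q]$. Since $(1-q)^{2n+1}$ is primitive over $\mathbb{Z}$, Gauss's lemma then yields $R(q)/(1-q)^{2n+1}\in\mathbb{Z}[q]$, whence $d_n(q)=(-1)^{n+1}R(q)/(1-q)^{2n+1}\in\mathbb{Z}[q]$.

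First I would verify that $R(q)\in\mathbb{Z}[q]$. Using $(-1;q)_{n+2}=2\prod_{j=1}^{n+1}(1+q^j)$, one checks term by term that each summand $(-1;q)_{n+2}(-1)^k\binom{2n+1}{k}/(1+q^{k-n})$ lies in $\mathbb{Z}[q]$: when $1\le k-n\le n+1$ the denominator is one of the factors of the product; when $k=n$ the prefactor $2$ cancels $1+q^0=2$; and when $k<n$ one rewrites $1/(1+q^{k-n})=q^{n-k}/(1+q^{n-k})$, where $1+q^{n-k}$ is again one of the factors of the product.

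Next I would show that $f_n(q)=O((1-q)^{2n+1})$ near $q=1$ by expanding each term in a Taylor series. Set $\epsilon=q-1$; by the generalized binomial theorem,
\[
q^{k-n}=(1+\epsilon)^{k-n}=\sum_{r\ge 0}\binom{k-n}{r}\epsilon^r,
\]
where each $\binom{k-n}{r}$ is a polynomial of degree $r$ in $k$. Writing
\[
\frac{1}{1+q^{k-n}}=\frac{1}{2}\left(1+\frac{1}{2}\sum_{r\ge 1}\binom{k-n}{r}\epsilon^r\right)^{-1}
\]
and expanding the inverse as a geometric series, one sees that the coefficient of $\epsilon^m$ is a polynomial $P_m(k)$ in $k$ of degree at most $m$. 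Consequently the $\epsilon^m$-coefficient of $f_n(q)$ equals $\sum_{k=0}^{2n+1}(-1)^k\binom{2n+1}{k}P_m(k)$, which is (up to sign) the $(2n+1)$-th finite difference of $P_m$ at $0$, and therefore vanishes whenever $\deg P_m<2n+1$, in particular for every $m\le 2n$. Since $(-1;q)_{n+2}$ evaluated at $q=1$ equals $2^{n+2}\ne 0$, the same order of vanishing is inherited by $R(q)$, so $(1-q)^{2n+1}$ divides $R(q)$ in $\mathbb{Q}[q]$.

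Combining the two steps with the Gauss's-lemma argument then gives $d_n(q)\in\mathbb{Z}[q]$. The main technical hurdle is the Taylor-expansion step, namely tracking that the $\epsilon^m$-coefficient of $1/(1+q^{k-n})$ is polynomial in $k$ of degree at most $m$. Once this polynomiality is in place, the standard finite-difference identity $\sum_{k=0}^{N}(-1)^k\binom{N}{k}P(k)=0$ for $\deg P<N$ simultaneously annihilates $2n+1$ orders of vanishing, and the remaining passage from $\mathbb{Q}[q]$ to $\mathbb{Z}[q]$ via primitivity of $(1-q)^{2n+1}$ is routine.
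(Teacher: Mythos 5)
Your proposal is correct and follows essentially the same route as the paper: both reduce the claim to showing that the order-$m$ Taylor data of $1/(1+q^{k-n})$ at $q=1$ is polynomial in $k$ of degree at most $m$, and then kill the coefficients of $f_n$ with the $(2n+1)$-st finite difference of a polynomial of degree $\le 2n$. The only differences are presentational — you work with Taylor coefficients in $\epsilon=q-1$ via a geometric-series expansion where the paper tracks the Laurent polynomials $P_{m,k}(x)$ through an explicit differentiation recurrence, and you spell out the integrality of $(-1;q)_{n+2}f_n(q)$ and the Gauss's-lemma step that the paper dismisses as clear.
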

\begin{proof}
Let $g_n(q)=(-1)^{n+1}(-1;q)_{n+2}$. Then $f_n(q)g_n(q)$ is clearly a polynomial in $\Z[q]$. 
 We must show that   1 is a  zero of order $2n+1$ of  the polynomial  $f_n(q)g_n(q)$ or
 $$d^p(f_n(q)g_n(q))/dq^n|_{q=1}=0\quad \textrm{for}\quad  p=0,\ldots, 2n. 
 $$
 By Leibniz's rule it suffices to show that 
 $f_n^{(p)}(1)=0$ for $p=0,\ldots, 2n$.

For any $k\in\mathbb{Z}$ and  $m\in \N$, 
we define 
the Laurent polynomial
 $P_{m,k}(x)$ by the relation:
$$ 
h_k^{(m)}(x)=\left(\frac{d}{dx}\right)^m(1+x^{k})^{-1}
=\frac{P_{m,k}(x)}{(1+x^{k})^{m+1}}.$$
Thus $P_{0,k}=1$, $P_{1,k}=-kx^{k-1}$, and for $m\geq0$,  we have
$$P_{m+1,k}(x)=(1+x^{k})P_{m,k}'(x)-k(m+1)x^{k-1}P_{m,k}(x).
$$
Therefore the $P_{m,k}$ can, for $m\geq1$, be written as follows:
$$P_{m,k}(x)=\sum_{l=1}^m\alpha_{l,m}x^{lk-m},$$
where $\alpha_{1,1}=-k$ and for $m\geq 1$, $\alpha_{1,m+1}=(k-m)\alpha_{1,m}$,
$\alpha_{m+1,m+1}=(m-k)\alpha_{m,m}$,
\begin{align*}
\alpha_{l,m+1}&=(lk-m)\alpha_{l,m}+(lk-mk-2k-m)\alpha_{l-1,m},\quad 2\leq l\leq m.
\end{align*}
This shows that for $m\geq 1$ and $1\leq l\leq m$,  the coefficient $\alpha_{l,m}$ is a polynomial in the variable $k$, with degree less than or equal to $m$. We deduce that $P_{m,k}(1)=\sum_{l=1}^m\alpha_{l,m}$ is also a polynomial in the variable $k$, with degree less than or equal to $m$, therefore we can write for some rational coefficients $a_j(m)$ only depending on $m$:
$$h_k^{(m)}(1)=\frac{P_{m,k}(1)}{2^{m+1}}=\sum_{j=0}^{m}a_j(m)k^j.$$
Thus, differentiating \eqref{eq:f} $m$ times  ($m\geq 0$) and then setting $q=1$, we get 
\begin{align*}
f_n^{(m)}(1)
&=\sum_{k=0}^{2n+1}{2n+1\choose k}(-1)^k\sum_{j=0}^{m}a_j(m)(k-n)^j\\
&=\sum_{j=0}^{2n}a_j(m) \sum_{k=0}^{2n+1}{2n+1\choose k}(-1)^k(k-n)^j.
\end{align*}
Now, applying $2n+1$ times  the finite difference operator $\Delta$  (defined by $\Delta f(x):=f(x+1)-f(x)$) to the polynomial $(n+1-x)^j$ ($0\leq j\leq 2n$)  and setting $x=0$
we get
$$
\left.\Delta^{2n+1}(n+1-x)^j\right|_{x=0}=\sum_{k=0}^{2n+1}{2n+1\choose k}(-1)^k(k-n)^j,
$$
which should  vanish because $(n+1-x)^j$ is a polynomial in $x$ of degree $j<2n+1$.
\end{proof}

\section{A $q$-analogue for type $B$}
A $B_n$-analogue of Carlitz's  $q$-Eulerian  polynomials are introduced by   Chow and Gessel~ \cite{CG07}. 
These  polynomials $B_n(t,q)$  are defined by
\begin{align}\label{eq:Beulerdef}
\sum_{k\geq 0} [2k+1]_q^n t^k=\frac{B_n(t,q)}{(t;q^2)_{n+1}}.
\end{align}
Let $B_n(t,q):=\sum_{k=0}^{n}B_{n,k}(q)t^{k}$. Then, the coefficients  $B_{n,k}(q)$ satisfy the recurrence relation~\cite[Prop. 3.2]{CG07}:
\begin{align}\label{eq:Beuler}
B_{n,k}(q)=[2k+1]_{q}B_{n-1, k}(q)+q^{2k-1}[2n-2k+1]_{q} B_{n-1, k-1}(q)\qquad 1\leq k\leq n.
\end{align}
We have the following $q$-analogue of \eqref{eq:FSB}.
\begin{thm}\label{th:B}
For any positive integer $n$, there are polynomials  $b_{n,k}(q)\in\mathbb{N}[q]$ such that
 the $q$-Eulerian polynomials of type $B$ can be written as follows: 
 \begin{align}\label{eq:qBFSbis}
 B_{n}(t,q)=\sum_{k=0}^{n}B_{n,k}(q)t^{k}=\sum_{k=0}^{\lfloor n/2\rfloor}b_{n,k}(q)t^{k}(-tq^{2k+1};q^{2})_{n-2k}.
 \end{align}
 Moreover, the coefficients $b_{n,k}(q)$ satisfy the following recurrence relation:
  \begin{align}\label{eq:BHan}
 b_{n,k}(q)=[2k+1]_{q}b_{n-1,k}(q)+(1+q)(1+q^{2k-1})q^{2k-1}[n+1-2k]_{q^{2}}b_{n-1,k-1}(q)
\end{align}
for $n\geq 2$ and $0\leq k \leq \lfloor n/2\rfloor$,  with $b_{1,0}(q)= 1$, and $b_{n,k}(q)=0$ for $k<0$ or $k > \lfloor n/2\rfloor$.
\end{thm}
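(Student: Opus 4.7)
The plan is to mimic the proof of Theorem~\ref{th:A}. I would first take the recurrence (\ref{eq:BHan}) together with the initial condition $b_{1,0}(q)=1$ as the \emph{definition} of the polynomials $b_{n,k}(q)$. The positivity $b_{n,k}(q)\in\mathbb{N}[q]$ then follows immediately by induction, since every factor appearing on the right of (\ref{eq:BHan}) already lies in $\mathbb{N}[q]$. It remains to verify that these $b_{n,k}(q)$ satisfy (\ref{eq:qBFSbis}).

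Next I would apply the $q$-binomial formula (\ref{eq:qbinomial}) with $q$ replaced by $q^{2}$ and $z=-tq^{2s+1}$ to expand $(-tq^{2s+1};q^{2})_{n-2s}$, and read off the coefficient of $t^{k}$ on the right of (\ref{eq:qBFSbis}). Since the resulting $q$-exponent telescopes, $(2s+1)(k-s)+(k-s)(k-s-1)=k^{2}-s^{2}$, the proposed expansion (\ref{eq:qBFSbis}) becomes equivalent to
\begin{align*}
B_{n,k}(q)=\sum_{s=0}^{k}{n-2s\brack k-s}_{q^{2}}q^{k^{2}-s^{2}}\,b_{n,s}(q).
\end{align*}
Substituting this into Chow--Gessel's recursion (\ref{eq:Beuler}), applying (\ref{eq:BHan}) to $b_{n,s}(q)$ on the left-hand side, shifting $s\mapsto s+1$ in the second resulting sum, and extracting the coefficient of $b_{n-1,s}(q)$ reduces the statement (after dividing out the common factor $q^{k^{2}-s^{2}}$) to the single $q$-binomial identity
\begin{align*}
&[2s+1]_{q}{n-2s\brack k-s}_{q^{2}}+(1+q)(1+q^{2s+1})[n-1-2s]_{q^{2}}{n-2-2s\brack k-s-1}_{q^{2}}\\
&\qquad\qquad=[2k+1]_{q}{n-1-2s\brack k-s}_{q^{2}}+[2n-2k+1]_{q}{n-1-2s\brack k-s-1}_{q^{2}}.
\end{align*}

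To finish, I would use the quotient identity $[k]_{q^{2}}{n\brack k}_{q^{2}}=[n-k+1]_{q^{2}}{n\brack k-1}_{q^{2}}$ and the Pascal rule ${n\brack k}_{q^{2}}={n-1\brack k}_{q^{2}}+q^{2(n-k)}{n-1\brack k-1}_{q^{2}}$ to rewrite every $q^{2}$-binomial appearing above as a scalar multiple of ${n-1-2s\brack k-s-1}_{q^{2}}$; combined with the addition rule $[n-k-s]_{q^{2}}+q^{2(n-k-s)}[k-s]_{q^{2}}=[n-2s]_{q^{2}}$, the identity collapses to
\begin{align*}
&[2s+1]_{q}[n-2s]_{q^{2}}+(1+q)(1+q^{2s+1})[n-k-s]_{q^{2}}[k-s]_{q^{2}}\\
&\qquad\qquad=[2k+1]_{q}[n-k-s]_{q^{2}}+[2n-2k+1]_{q}[k-s]_{q^{2}}.
\end{align*}
After multiplying by $(1-q^{2})^{2}$ and dividing by $(1+q)$, this becomes the elementary identity
\begin{align*}
(1-a)(1-bc)+(1+a)(1-b)(1-c)=(1-ab)(1-c)+(1-ac)(1-b)
\end{align*}
in the indeterminates $a=q^{2s+1}$, $b=q^{2k-2s}$, $c=q^{2n-2k-2s}$; both sides expand to $2-b-c-ab-ac+2abc$.

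The main obstacle is purely bookkeeping. Because the recursion (\ref{eq:Beuler}) and the definition (\ref{eq:BHan}) mix the two bases $q$ and $q^{2}$, the $q$-powers and shifted indices must be tracked with great care; and the two $q^{2}$-binomials arising at the intermediate step are linearly dependent as functions of $(n,k,s)$, so one cannot match coefficients against them naively. The trick, exactly as in the proof of Theorem~\ref{th:A}, is to reduce everything to a single $q^{2}$-binomial factor via the classical quotient identity, after which only the elementary three-variable identity above remains.
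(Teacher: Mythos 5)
Your proposal is correct and follows essentially the same route as the paper's own proof: define $b_{n,k}(q)$ by the recurrence, convert \eqref{eq:qBFSbis} via the $q$-binomial theorem into $B_{n,k}(q)=\sum_{s}{n-2s\brack k-s}_{q^{2}}q^{k^{2}-s^{2}}b_{n,s}(q)$, substitute into \eqref{eq:Beuler}, and extract coefficients of $b_{n-1,s}(q)$ to reduce everything to the same elementary $q$-integer identity. The only difference is cosmetic: you spell out the final verification (the three-variable identity in $a,b,c$) that the paper dismisses as ``easy to verify,'' and your computations there check out.
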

\begin{proof}
 Assume that $b_{n,k}(q)$ are coefficients satisfying \eqref{eq:BHan}. Then, by applying  \eqref{eq:qbinomial} with the substitution $q\leftarrow q^2$,
 we derive that \eqref{eq:qBFSbis} is equivalent to:
 \begin{align}\label{eq:qB}
 B_{n,k}(q)=\sum_{s\geq 0} {n-2s\brack k-s}_{q^{2}} q^{k^{2}-s^{2}} b_{n,s}(q).
\end{align}
Substituting  \eqref{eq:qB} in \eqref{eq:Beuler}, and using \eqref{eq:BHan}, we get:
\begin{align*}
&\sum_{s\geq 0}{n-2s\brack k-s}_{q^{2}}q^{k^{2}-s^{2}}\left([2s+1]_{q}b_{n-1,s}(q)+(1+q)(1+q^{2s-1})q^{2s-1}[n+1-2s]_{q^{2}}b_{n-1, s-1}(q)\right)\\
&=\sum_{s\geq 0}q^{k^{2}-s^{2}}\left([2k+1]_{q} {n-1-2s\brack k-s}_{q^{2}}+
[2n+1-2k]_{q}{n-1-2s\brack k-1-s}_{q^{2}}\right)b_{n-1,s}(q).
\end{align*}
Extracting the coefficents of $b_{n-1,s}(q)$ we obtain:
\begin{align*}
{n-2s\brack k-s}_{q^{2}}[2s+1]_{q} &+{n-2-2s\brack k-s-1}_{q^{2}}(1+q)(1+q^{2s+1})[n-1-2s]_{q^{2}}\\
&=[2k+1]_{q} {n-1-2s\brack k-s}_{q^{2}}+ [2n+1-2k]_{q} {n-1-2s\brack k-1-s}_{q^{2}}.
\end{align*}
Canceling the common factors yields:
\begin{align*}
[n-2s]_{q^{2}} [2s+1]_{q}&+[n-k-s]_{q^{2}}(1+q)(1+q^{2s+1}) [k-s]_{q^{2}}\\
&=[2k+1]_{q}[n-k-s]_{q^{2}}+[2n+1-2k]_{q}[k-s]_{q^{2}}.
\end{align*}
The last identity is easy to verify, and this proves  \eqref{eq:qBFSbis}.
\end{proof}
For $n=1,\ldots, 4$, equation \eqref{eq:qBFSbis} reads:
\begin{align*}
B_{1}(t,q)&=1+qt;\\
B_{2}(t,q)&=(-tq;q^{2})_{2}+(q+2q^{2}+q^{3})t;\\
B_{3}(t,q)&=(-tq;q^{2})_{3}+(2q+5q^{2}+6q^{3}+5q^{4}+2q^{5}) t(1+tq^{3});\\
B_{4}(t,q)&=(-tq;q^{2})_{4}+(3q+9q^{2}+15q^{3}+18q^{4}+15q^{5}+9q^{6}+3q^{7})   t(-tq^{3};q^{2})_{2}\\
&\hspace{2.5cm}+(2q^{4}+7q^{5}+11 q^{6}+ 13q^{7}+ 14 q^{8}+13 q^{9}+11q^{10}+ 7q^{11}+2q^{12})t^{2}.
\end{align*}

\medskip

Theorem~\ref{th:B} implies immediately the following result, of which the first  was derived in \cite[Theorem 1.1 (d)]{FH10} with more work.
\begin{coro} For $n\geq 0$, we have 
\begin{align}
B_{2n+1}(-q^{-2n-1},q)&=0, \\
B_{2n}(-q^{-2n-1},q)&=(-1)^n q^{-n(2n+1)} b_{2n,n}(q). 
\end{align}
\end{coro}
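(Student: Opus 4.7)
The plan is to obtain both identities by direct specialization of the expansion formula \eqref{eq:qBFSbis} given in Theorem~\ref{th:B}. Writing
$$B_m(t,q)=\sum_{k=0}^{\lfloor m/2\rfloor} b_{m,k}(q)\,t^k\,(-tq^{2k+1};q^2)_{m-2k},$$
I would set $t=-q^{-(2n+1)}$ and note the key simplification
$$-tq^{2k+1}=q^{2k+1-(2n+1)}=q^{2(k-n)},$$
so that the $q$-shifted factorial appearing in the $k$-th summand becomes
$$(-tq^{2k+1};q^2)_{m-2k}=(q^{2(k-n)};q^2)_{m-2k}=\prod_{j=0}^{m-2k-1}\bigl(1-q^{2(k-n+j)}\bigr).$$
The central observation is then to decide when this product vanishes, i.e.\ when $k-n+j=0$ for some $0\le j\le m-2k-1$, which is equivalent to $0$ lying in the arithmetic progression $\{k-n,k-n+1,\ldots,k-n+m-2k-1\}$.

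For the first identity I take $m=2n+1$, so the range of exponents (divided by $2$) is $\{k-n,\ldots,n-k\}$. Since $0\le k\le n$, one has $k-n\le 0\le n-k$, so $0$ belongs to this set for \emph{every} admissible $k$. Consequently each summand vanishes and $B_{2n+1}(-q^{-(2n+1)},q)=0$.

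For the second identity I take $m=2n$; now the range of exponents (divided by $2$) is $\{k-n,\ldots,n-k-1\}$. The value $0$ lies in this set iff $k\le n-1$, so only the term with $k=n$ survives. For $k=n$ the $q$-shifted factorial is an empty product equal to $1$, so
$$B_{2n}(-q^{-(2n+1)},q)=b_{2n,n}(q)\cdot\bigl(-q^{-(2n+1)}\bigr)^{n}=(-1)^n q^{-n(2n+1)}b_{2n,n}(q),$$
as claimed. No serious obstacle is expected: the argument is a careful index check of when the factor $1-q^0$ appears, and the only care needed is in verifying that the two progressions above indeed have the stated endpoints when $m=2n+1$ and $m=2n$.
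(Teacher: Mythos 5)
Your proof is correct and follows essentially the same route as the paper: specialize $t=-q^{-2n-1}$ in the expansion \eqref{eq:qBFSbis} and check exactly which $q$-shifted factorials $(q^{2(k-n)};q^2)_{m-2k}$ contain the vanishing factor $1-q^0$. The paper states the vanishing without spelling out the index check, which you carry out explicitly and correctly.
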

\begin{proof}  By \eqref{eq:qBFSbis} we get
$$
B_{2n+1}(-q^{-2n-1},q)=\sum_{k=0}^nb_{2n+1,k}(q)(-q^{-2n-1})^k (q^{-2n+2k};q^2)_{2n+1-2k}=0.
$$
Substituting $n$ by $2n$ and $t$ by $-q^{-2n-1}$ in (\ref{eq:qBFSbis}) yields
 \begin{align*}
 B_{2n}(-q^{-2n-1},q)
	&=\sum_{k=0}^{n}b_{2n,k}(q)(-q^{-2n-1})^{k}(q^{-2n+2k};q^{2})_{2n-2k}\\
	&=(-1)^n q^{-n(2n+1)} b_{2n,n}(q).\qedhere
 \end{align*}
\end{proof}

The above result leads to define  a $q$-analogue of $B_{2n}(-1)=(-1)^n4^n E_{2n}$ (where the $E_{2n}$'s are the famous \emph{secant numbers}) by 
\begin{align}\label{eq:newsecant}
E_{2n}^*(q):=(-1)^n q^{n(n+1)} B_{2n}(-q^{-2n-1}, q).
\end{align}

\begin{thm} There is a polynomial $G_{2n}^*(q)\in\mathbb{Z}[q]$ such that $G^*_{2n}(1)=E_{2n}$ and 
$$
E^*_{2n} (q)= (1+q)(1+q^3)(1+q^5)\cdots (1+q^{2n-1})\cdot (1+q)^n \cdot 
G^*_{2n}(q).
$$
\end{thm}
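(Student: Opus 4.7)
\emph{Step 1 (rescaling).} The first step is to clear negative powers of $q$ from $E^*_{2n}(q)=q^{-n^2}b_{2n,n}(q)$ (obtained by combining the previous corollary with \eqref{eq:newsecant}). I would set $b^*_{n,k}(q):=q^{-k^2}b_{n,k}(q)$. Since $2k-1+(k-1)^2=k^2$, substituting $b_{n,k}=q^{k^2}b^*_{n,k}$ into \eqref{eq:BHan} produces a common $q^{k^2}$ factor which cancels to yield
$$
b^*_{n,k}(q)=[2k+1]_q\, b^*_{n-1,k}(q)+(1+q)(1+q^{2k-1})[n+1-2k]_{q^2}\, b^*_{n-1,k-1}(q),
$$
with $b^*_{1,0}=1$; hence each $b^*_{n,k}(q)\in\mathbb{N}[q]$, and in particular $E^*_{2n}(q)=b^*_{2n,n}(q)\in\mathbb{N}[q]$.

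\emph{Step 2 (strengthened inductive claim).} Put $F(m):=(1+q)^m\prod_{i=1}^{m}(1+q^{2i-1})$. I would prove by induction on $n$ the stronger claim: \emph{for every integer $j$ with $0\le j\le n$}, there is a polynomial $G^{(j)}_{2n}(q)\in\mathbb{Z}[q]$ such that $b^*_{2n,\,n-j}(q)=F(n-j)\, G^{(j)}_{2n}(q)$. The theorem is then the case $j=0$, with $G^*_{2n}:=G^{(0)}_{2n}$.

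\emph{Step 3 (inductive step, and specialization at $q=1$).} Applying the recurrence of Step~1 twice (first at row $2n$ to pass to row $2n-1$, then at row $2n-1$ to pass to row $2n-2$) expresses $b^*_{2n,n-j}$ as a $\mathbb{Z}[q]$-linear combination of $b^*_{2n-2,n-j}$, $b^*_{2n-2,n-1-j}$ and $b^*_{2n-2,n-2-j}$. Substituting the inductive factorization for each of these, a short computation shows that the $(1+q)$- and odd $(1+q^{2i-1})$-factors picked up along the way combine to give the common factor $F(n-j)$ in all three terms. Extracting it yields the three-term recursion
$$
G^{(j)}_{2n}=[2n-2j+1]_q^{2}\, G^{(j-1)}_{2n-2}+C_{n,j}(q)\, G^{(j)}_{2n-2}+[2j+1]_{q^2}[2j+2]_{q^2}\, G^{(j+1)}_{2n-2},
$$
where $C_{n,j}(q):=[2n-2j+1]_q[2j]_{q^2}+[2j+1]_{q^2}[2n-2j-1]_q\in\mathbb{N}[q]$, with the convention $G^{(j)}_{2m}=0$ for $j\notin\{0,\ldots,m\}$; the base case $n=1$ is immediate, giving $G^{(0)}_2=G^{(1)}_2=1$. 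Finally, $F(n)(1)=2^n\cdot 2^n=4^n$, and the classical evaluation $B_{2n}(-1)=(-1)^n 4^n E_{2n}$ recalled just before \eqref{eq:newsecant} gives $E^*_{2n}(1)=4^n E_{2n}$, whence $G^*_{2n}(1)=E_{2n}$.

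The main obstacle is the bookkeeping in Step~3: one must verify that the three ``correction'' products, namely $1$, $(1+q)(1+q^{2n-2j-1})$ and $(1+q)^2(1+q^{2n-2j-1})(1+q^{2n-2j-3})$ picked up respectively by the first, second and third terms of the double recurrence, combine with the corresponding inductive factors $F(n-j)$, $F(n-1-j)$, $F(n-2-j)$ to produce one and the same $F(n-j)$. Everything else is routine induction and the $q=1$ evaluation.
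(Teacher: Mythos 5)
Your proof is correct, but it takes a genuinely different route from the paper's. The paper works analytically: it expands $B_n(t,q)/(t;q^2)_{n+1}$ by partial fractions into $(1-q)^{-n}\sum_k\binom{n}{k}(-q)^k/(1-tq^{2k})$, defines $G^*_{2n}$ as an explicit quotient, and then shows $G^*_{2n}\in\mathbb{Z}[q]$ by proving that $q=1$ is a zero of order $2n$ of $f_n^*(q)g_n^*(q)$ via Leibniz's rule and a finite-difference computation (exactly parallel to the proof of Proposition~\ref{prop:T}). You instead work combinatorially from the recurrence \eqref{eq:BHan}: the rescaling $b^*_{n,k}=q^{-k^2}b_{n,k}$ is the type-$B$ analogue of the trick used for $a^*_{n,k}$ in the proof of Corollary~2, the identification $E^*_{2n}=b^*_{2n,n}$ is right, and I have checked that your two-step telescoping of the recurrence does produce the stated three-term recursion for $G^{(j)}_{2n}$, with the correction products $1$, $(1+q)(1+q^{2n-2j-1})$ and $(1+q)^2(1+q^{2n-2j-1})(1+q^{2n-2j-3})$ matching $F(n-j)/F(n-j)$, $F(n-j)/F(n-1-j)$ and $F(n-j)/F(n-2-j)$ as claimed (e.g.\ for $n=2$ it reproduces $G^{(0)}_4=2+q+2q^2$, consistent with $b_{4,2}(q)$ listed after Theorem~\ref{th:B}, and for $n=3$ it gives $G^{(0)}_6(1)=61=E_6$). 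What your approach buys is substantial: since all coefficients in your recursion lie in $\mathbb{N}[q]$ (the only caveat being the harmless boundary term $[2n-2j-1]_q$ at $j=n$, which multiplies $G^{(n)}_{2n-2}=0$), induction yields $G^*_{2n}(q)\in\mathbb{N}[q]$, i.e.\ nonnegativity of all coefficients — essentially settling the Conjecture stated right after this theorem, which the paper's proof does not touch. You should make the strengthened inductive claim and the bookkeeping of Step~3 fully explicit, and add the one-line check that no coefficient of $G^*_{2n}$ below its degree vanishes if you want strict positivity, but the argument as outlined is sound.
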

\begin{proof}
Recall that $E_{2n}^*(q)=(-1)^nq^{n(n+1)} B_{2n}(-q^{-2n-1},q)$. 
From \eqref{eq:Beulerdef} we derive 
\begin{align*}
\frac{B_n(t,q)}{(t;q^2)_{n+1}}&=(1-q)^{-n}\sum_{j\geq 0}(1-q^{2j+1})^nt^j\\
&=(1-q)^{-n}\sum_{j\geq 0}t^j\sum_{k=0}^n {n\choose k} (-q^{2j+1})^k\\
&=(1-q)^{-n}\sum_{k=0}^n{n\choose k}\frac{(-q)^k}{1-tq^{2k}}.
\end{align*}
Substituting $n$ by $2n$ and setting $t=-q^{-2n-1}$ we obtain
$$
E_{2n}^*(q)
=(-1)^nq^{n(n+1)} \frac{(-q^{-2n-1};q^2)_{2n+1}}{(1-q)^{2n} } \sum_{k=0}^{2n}{2n\choose k} \frac{(-q)^k}{1+q^{2k-2n-1}}.
$$
Let
\begin{align*}
G_{2n}^*(q)&:=\frac{E_{2n}^*(q)}{(1+q)(1+q^3)\ldots (1+q^{2n-1})(1+q)^n}\\
&=(-1)^nq^{-n-1}
\frac{(-q;q^2)_{n+1}}{(1+q)^n(1-q)^{2n}}\sum_{k=0}^{2n}{2n\choose k} \frac{(-q)^k}{1+q^{2k-2n-1}}.
\end{align*}
For any  nonnegative  integer $n$,   set
\begin{align}\label{eq:f*}
f_n^*(q):=\sum_{k=0}^{2n}{2n\choose k}\frac{(-q)^k}{1+q^{2k-2n-1}}.
\end{align}
Let $g_n^*(q)=(-1)^{n}q^{-n-1} (-q;q^2)_{n+1}/(1+q)^n$. Then $f_n^*(q)g_n^*(q)$ is clearly a polynomial in $\Z[q]$. 
 We must show that   1 is a zero of order $2n$ of  the polynomial  $f_n^*(q)g_n^*(q)$ or
 $$d^p(f_n^*(q)g_n^*(q))/dq^n|_{q=1}=0\quad \textrm{for}\quad  p=0,\ldots, 2n-1. 
 $$
 By Leibniz's rule it suffices to show that 
 $d^p(f_n^*(q))/dq^p|_{q=1}=0$ for $p=0,\ldots, 2n-1$.
The rest of the proof is almost the same as that of Proposition~\ref{prop:T}, and is left to the reader.
\end{proof}

\begin{conj}  All the coefficients of the polynomials $G^*_{2n}(q)$  are positive.
\end{conj}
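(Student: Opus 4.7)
The plan is to introduce a rescaling $b^{*}_{n,k}(q)$ of the coefficients $b_{n,k}(q)$ from Theorem~\ref{th:B} that (i) satisfies a recurrence with coefficients in $\mathbb{N}[q]$, and (ii) specializes along the diagonal $k=n$ to $G^{*}_{2n}(q)$. This mimics the device used after Theorem~\ref{th:A}, where $a^{*}_{n,k}(q):=q^{-k(k+1)/2}a_{n,k}(q)$ resolved the analogous positivity question for $T_{2n+1}(q)$.

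Concretely, I would set
\begin{equation*}
b^{*}_{n,k}(q):=\frac{b_{n,k}(q)}{q^{k^{2}}\prod_{j=1}^{k}(1+q)(1+q^{2j-1})}.
\end{equation*}
Dividing both sides of \eqref{eq:BHan} by $q^{k^{2}}\prod_{j=1}^{k}(1+q)(1+q^{2j-1})$, the prefactor $(1+q)(1+q^{2k-1})q^{2k-1}$ of the second term cancels with the top factor $(1+q)(1+q^{2k-1})$ of the product together with the shift $q^{2k-1}q^{-k^{2}}=q^{-(k-1)^{2}}$, yielding the manifestly polynomial recurrence
\begin{equation*}
b^{*}_{n,k}(q)=[2k+1]_{q}\,b^{*}_{n-1,k}(q)+[n+1-2k]_{q^{2}}\,b^{*}_{n-1,k-1}(q),
\end{equation*}
with $b^{*}_{1,0}(q)=1$ and $b^{*}_{n,k}(q)=0$ outside the admissible range $0\leq k\leq\lfloor n/2\rfloor$. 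Since both $[2k+1]_{q}$ and $[n+1-2k]_{q^{2}}$ lie in $\mathbb{N}[q]$ whenever $k\leq\lfloor n/2\rfloor$, an induction on $n$ immediately yields $b^{*}_{n,k}(q)\in\mathbb{N}[q]$ throughout.

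It then remains to identify $G^{*}_{2n}(q)$ with $b^{*}_{2n,n}(q)$. The corollary following Theorem~\ref{th:B} states $B_{2n}(-q^{-2n-1},q)=(-1)^{n}q^{-n(2n+1)}b_{2n,n}(q)$, so by \eqref{eq:newsecant},
\begin{equation*}
E^{*}_{2n}(q)=(-1)^{n}q^{n(n+1)}B_{2n}(-q^{-2n-1},q)=q^{-n^{2}}b_{2n,n}(q).
\end{equation*}
Dividing by $(1+q)^{n}\prod_{j=1}^{n}(1+q^{2j-1})$ and comparing with the definition of $b^{*}_{n,k}(q)$ gives $G^{*}_{2n}(q)=b^{*}_{2n,n}(q)\in\mathbb{N}[q]$, which settles the conjecture.

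I do not expect a serious obstacle here: the whole argument hinges on guessing the correct rescaling, which is in fact forced by requiring that the prefactor $(1+q)(1+q^{2k-1})q^{2k-1}$ of the second recurrence coefficient be absorbed while leaving $[n+1-2k]_{q^{2}}$ intact, together with the balance $k^{2}-(k-1)^{2}=2k-1$. Once the rescaling is chosen, both the recurrence computation and the diagonal identification reduce to routine algebra; the only mild care needed is to check that the boundary conditions $b^{*}_{n,0}(q)=1$ and $b^{*}_{n,k}(q)=0$ for $k>\lfloor n/2\rfloor$ remain compatible with the rescaling, which is immediate.
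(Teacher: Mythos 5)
This statement is left as an \emph{open conjecture} in the paper: the authors only establish the weaker fact $G^*_{2n}(q)\in\mathbb{Z}[q]$, by showing analytically that $q=1$ is a zero of order $2n$ of $f_n^*(q)g_n^*(q)$. So there is no proof in the paper to compare against; what you have written is, as far as I can verify, a correct proof of the conjecture itself (in the sense of $\mathbb{N}[q]$-positivity, which is how ``positive integral coefficients'' is used elsewhere in the paper, e.g.\ for $T_{2n+1}(q)$). The two key steps check out. First, with $c_k=q^{k^2}\prod_{j=1}^k(1+q)(1+q^{2j-1})$ one has $c_k/c_{k-1}=q^{2k-1}(1+q)(1+q^{2k-1})$, which exactly absorbs the prefactor of the second term of \eqref{eq:BHan}, so that $b^*_{n,k}=[2k+1]_q\,b^*_{n-1,k}+[n+1-2k]_{q^2}\,b^*_{n-1,k-1}$ with both coefficients in $\mathbb{N}[q]$ for $0\le k\le\lfloor n/2\rfloor$; hence $b^*_{n,k}(q)\in\mathbb{N}[q]$ by induction. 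Second, $E^*_{2n}(q)=q^{-n^2}b_{2n,n}(q)$ gives $G^*_{2n}(q)=b^*_{2n,n}(q)$ directly from the definitions. I confirmed this against the paper's data: $G^*_4(q)=b^*_{4,2}(q)=2+q+2q^2$ and $G^*_6(q)=b^*_{6,3}(q)=5+6q+14q^2+11q^3+14q^4+6q^5+5q^6$, with $G^*_6(1)=61=E_6$. Your rescaling is precisely the type-$B$ analogue of the paper's own device $a^*_{n,k}(q)=q^{-k(k+1)/2}a_{n,k}(q)$, the new observation being that here the factor $(1+q)(1+q^{2k-1})$ can be divided out as well. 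One caveat: if ``positive'' is meant strictly (every coefficient up to the degree nonzero), your argument needs a short supplement, which is available from the identity $G^*_{2n}=b^*_{2n-1,n-1}=[2n-1]_q\,G^*_{2n-2}+(1+q^2)\,b^*_{2n-2,n-2}$ together with an induction controlling degrees.
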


Since $G^*_{2n}(1)=E_{2n}$, the above conjecture would yield a new refinement  of the secant number.
\section{Application to unimodal problems}

A sequence $\{\alpha_0,\dots,\alpha_d\}$ is \emph{unimodal} if there exists an index $0\leq j\leq d$ such that $\alpha_i\leq\alpha_{i+1}$ for $i=1,\dots,j-1$ and $\alpha_i\geq\alpha_{i+1}$ for $i=j,\dots,d$. 
Chow and Gessel~\cite{CG07} studied   a kind of unimodality property of the $q$-Eulerian numbers assuming that $q$ is a real number. 
In this section,
we  derive some unimodal properties of  the sequences $(A_{n,k}(q))_{1\leq k\leq n}$ and $(B_{n,k}(q))_{1\leq k\leq n}$ from our previous results.
 From Theorem~\ref{th:A}, we are able to deduce the following corollary,  which provides a further support to Conjecture~4.8 in  \cite{CG07}.
\begin{prop}
Let $n\geq2$ be an integer and $j=\lfloor (n+1)/2\rfloor$. Then for $k=1,\dots,j-1$, we have $A_{n,k+1}(q)>A_{n,k}(q)$ if $q>1$ and $A_{n,n-k+1}(q)<A_{n,n-k}(q)$ if $q<1$.
\end{prop}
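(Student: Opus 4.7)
The plan is to deduce both inequalities directly from the expansion \eqref{eq:qFSbis} established inside the proof of Theorem~\ref{th:A}, namely
$$
A_{n,k}(q)=\sum_{s\geq 1}{n+1-2s\brack k-s}_q\, q^{(k-s)s+\binom{k-s}{2}}\,a_{n,s}(q).
$$
Since the polynomials $a_{n,s}(q)\in\mathbb{N}[q]$ are nonzero for $1\leq s\leq j:=\lfloor(n+1)/2\rfloor$ (as is visible from the recurrence \eqref{eq:Han} by induction), $a_{n,s}(q)>0$ for every real $q>0$.

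For the first assertion, I form $A_{n,k+1}(q)-A_{n,k}(q)=\sum_{s\geq 1}c_{k,s}(q)\,a_{n,s}(q)$. Using $\binom{k+1-s}{2}=\binom{k-s}{2}+(k-s)$ to extract the common power $q^{(k-s)s+\binom{k-s}{2}}$ and the identity
$$
{m\brack k+1-s}_q=\frac{[m-k+s]_q}{[k+1-s]_q}\,{m\brack k-s}_q
$$
with $m=n+1-2s$, one finds for $1\leq s\leq k$
$$
c_{k,s}(q)=\frac{q^{(k-s)s+\binom{k-s}{2}}\,{n+1-2s\brack k-s}_q}{[k+1-s]_q}\,\Bigl(q^{k}[n+1-k-s]_q-[k+1-s]_q\Bigr),
$$
while $c_{k,k+1}(q)=1$ (allowed since $k+1\leq j$) and $c_{k,s}(q)=0$ for $s>k+1$.

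The decisive step is to show that $q^{k}[n+1-k-s]_q-[k+1-s]_q>0$ for $q>1$ whenever $1\leq s\leq k\leq j-1$. Expanding both sides,
$$
q^{k}[n+1-k-s]_q=q^{k}+q^{k+1}+\cdots+q^{n-s},\qquad [k+1-s]_q=1+q+\cdots+q^{k-s},
$$
and noting that $k\leq(n-1)/2$ forces the first sum to have strictly more terms than the second, I pair $q^{k+i}$ with $q^{i}$ for $0\leq i\leq k-s$, so that $q^{k+i}>q^{i}$; the remaining terms $q^{2k-s+1},\ldots,q^{n-s}$ are positive. Combined with $a_{n,s}(q)>0$ (and $c_{k,k+1}(q)\,a_{n,k+1}(q)>0$), this yields $A_{n,k+1}(q)>A_{n,k}(q)$.

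The second assertion follows from the same scheme with $k$ replaced by $k':=n-k$. A short check shows that for $s>k+1$ both binomials vanish, whereas for $s=k+1$ the first binomial ${n-1-2k\brack n-2k}_q$ is zero and ${n-1-2k\brack n-2k-1}_q=1$, producing the contribution $-q^{(n-2k-1)(k+1)+\binom{n-2k-1}{2}}a_{n,k+1}(q)$. For $1\leq s\leq k$ the bracketed factor now reads $q^{n-k}[k+1-s]_q-[n+1-k-s]_q$; the condition $k\leq(n-1)/2$ makes the right sum have strictly more terms, and pairing $q^{n-k+i}$ with $q^{i}$ for $0\leq i\leq k-s$ gives $q^{n-k+i}<q^{i}$ for $q<1$, while the leftover terms $q^{k-s+1},\ldots,q^{n-k-s}$ on the right are positive. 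Again by positivity of the $a_{n,s}(q)$ this gives $A_{n,n-k+1}(q)<A_{n,n-k}(q)$.

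The main obstacle is really just bookkeeping: verifying that the boundary term $s=k+1$ (resp.\ $s=k'+1$) contributes with the correct sign, and that no spurious contributions arise from $s>k+1$, especially in the near-middle cases $k=j-1$ where several quantities only barely fit. Once this is sorted out, the comparison of $q$-integers is elementary.
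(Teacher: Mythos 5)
Your proof is correct. It starts from the same place as the paper's own argument --- the expansion \eqref{eq:qFSbis} and the term-by-term comparison of $A_{n,k+1}(q)$ with $A_{n,k}(q)$, with the boundary term $a_{n,k+1}(q)$ supplying strictness --- and your bracketed quantity $q^{k}[n+1-k-s]_q-[k+1-s]_q$ is, up to a positive factor, exactly the numerator $q^{n+1}-q^{k+s}+q^{s}-q^{k+1}$ that the paper analyzes. Where you diverge is in execution: you prove positivity by pairing the monomials $q^{k+i}$ against $q^{i}$ and counting that the first $q$-integer has $n-2k\geq 1$ more terms, whereas the paper bounds the numerator by a chain of inequalities culminating in the factorization $(q^{(n+1)/2}-1)(q^{(n+1)/2}-q^{(n-3)/2})$; your pairing argument is arguably more transparent. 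The more substantive difference is the case $q<1$: the paper disposes of it in one line via the symmetry $A_{n,n-k+1}(q)=q^{n(n-1)/2}A_{n,k}(1/q)$ (quoted as well known), while you rerun the whole computation at $k'=n-k$, carefully identifying that only $s\leq k+1$ contributes, that $s=k+1$ gives the strictly negative term $-q^{(n-2k-1)(k+1)+\binom{n-2k-1}{2}}a_{n,k+1}(q)$, and that the reversed pairing gives the opposite sign for $0<q<1$. Your version is longer but self-contained (it does not presuppose the symmetry of the $A_{n,k}(q)$), and I verified the bookkeeping at the boundary indices: it checks out.
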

\begin{proof}
We start from \eqref{eq:qFSbis}, which can be rewritten
$$ A_{n,k}(q)=\sum_{s=1}^k {n+1-2s\brack k-s}_{q} q^{(k-s)(k+s-1)/2} a_{n,s}(q),$$
for $k=1,\dots,n$, where we assume $a_{n,s}(q)=0$ for $s>j$. Thus we can write for $k=1,\dots,j-1$:
\begin{multline*}
A_{n,k+1}(q)-A_{n,k}(q)=a_{n,k+1}(q)\\+\sum_{s=1}^k {n+1-2s\brack k+1-s}_{q} q^{(k+1-s)(k+s)/2} a_{n,s}(q)\left(1-q^{-k}\frac{1-q^{k+1-s}}{1-q^{n+1-k-s}}\right).
\end{multline*}
We know that the $q$-binomial coefficient is a polynomial in $q$ with nonnegative integer coefficients, and from Theorem~\ref{th:A} that this is also true for $a_{n,s}(q)$, $s=1,\dots,k+1$.
Therefore it is enough to show that the coefficient between brackets is nonegative for $1\leq s\leq k\leq j-1$. This coefficient can be rewritten as:
$$\frac{q^{n+1}-q^{k+s}+q^s-q^{k+1}}{q^{n+1}-q^{k+s}}.$$
Assume first that $q>1$. As $k+s\leq 2j-2\leq n-1<n+1$, the denominator of this fraction is positive. Moreover, it is not difficult to see that under the conditions $1\leq s\leq k\leq j-1$, and by using $(n-1)/2\leq j\leq (n+1)/2$, we have the following inequalities:
\begin{eqnarray*}
q^{n+1}-q^{k+s}+q^s-q^{k+1}&\geq& q^{n+1}-q^{2k}+q^k-q^{k+1}\\
&\geq&q^{n+1}-q^{2j-2}+q^{j-1}-q^{j}\\
&\geq& q^{n+1}-q^{n-1}+q^{(n-3)/2}-q^{(n+1)/2}.
\end{eqnarray*}
This last expression can be rewritten $(q^{(n+1)/2}-1)(q^{(n+1)/2}-q^{(n-3)/2})$ and is nonnegative, which shows that $A_{n,k+1}(q)\geq A_{n,k}(q)$ for $k=1,\dots,j-1$. \\

In the case $0<q<1$, we only need to use the well-known relation $A_{n,n-k+1}(q)=q^{n(n-1)/2}A_{n,k}(1/q)$ for any $k=1,\dots,n$, and the result is obvious from the case $q>1$. 
\end{proof}

\medskip
 
In the type $B$ case, it is conjectured in \cite[Conjecture~4.6]{CG07} that the sequence $(B_{n,k}(q))_{0\leq k\leq n}$ is unimodal. 
By Theorem~\ref{th:B}, we are able to confirm partially this conjecture.
\begin{prop}
Let $n\geq2$ be an integer and $j=\lfloor n/2\rfloor$. Then for $k=1,\dots,j-1$, we have $B_{n,k+1}(q)>B_{n,k}(q)$ if $q>1$ and $B_{n,n-k}(q)<B_{n,n-k-1}(q)$ if $q<1$.
\end{prop}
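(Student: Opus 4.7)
The plan is to adapt the approach used for the type $A$ case in the previous proposition. The starting point is the inversion formula \eqref{eq:qB}, namely
$$B_{n,k}(q)=\sum_{s=0}^{k}{n-2s\brack k-s}_{q^{2}}q^{k^{2}-s^{2}}\,b_{n,s}(q),$$
with the convention $b_{n,s}(q)=0$ for $s>\lfloor n/2\rfloor$. Using the $q^{2}$-Pascal identity
$${n-2s\brack k+1-s}_{q^{2}}={n-2s\brack k-s}_{q^{2}}\,\frac{[n-k-s]_{q^{2}}}{[k+1-s]_{q^{2}}},$$
I would regroup $B_{n,k+1}(q)-B_{n,k}(q)$ so that the new term $s=k+1$ contributes exactly $b_{n,k+1}(q)$ while the terms with $0\leq s\leq k$ share a common $q^{2}$-binomial factor. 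The result should read
\begin{equation*}
B_{n,k+1}(q)-B_{n,k}(q)=b_{n,k+1}(q)+\sum_{s=0}^{k}{n-2s\brack k-s}_{q^{2}}q^{k^{2}-s^{2}}\cdot\frac{q^{2k+1}[n-k-s]_{q^{2}}-[k+1-s]_{q^{2}}}{[k+1-s]_{q^{2}}}\,b_{n,s}(q).
\end{equation*}

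Since the $q^{2}$-binomial coefficients and (by Theorem~\ref{th:B}) each $b_{n,s}(q)$ lie in $\N[q]$, and since $b_{n,0}(q)=1$ follows from the recurrence \eqref{eq:BHan}, the first inequality will follow once I check that $q^{2k+1}[n-k-s]_{q^{2}}>[k+1-s]_{q^{2}}$ for every $q>1$ and every $0\leq s\leq k\leq j-1$. The restriction $k\leq\lfloor n/2\rfloor-1$ forces $n\geq 2k+2$, so $n-k-s\geq k+2-s>k+1-s\geq 1$, whence $[n-k-s]_{q^{2}}>[k+1-s]_{q^{2}}$ for any real $q>0$; multiplying by the factor $q^{2k+1}>1$ for $q>1$ yields the required strict inequality.

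For the second part, concerning $0<q<1$, I would invoke the palindromic symmetry
$$B_{n,n-k}(q)=q^{n^{2}}\,B_{n,k}(1/q),$$
a type $B$ analogue of the symmetry used in the previous proposition, readily verified by induction on $n$ from the recurrence \eqref{eq:Beuler}. Applied at indices $n-k$ and $n-k-1$ it gives
$$B_{n,n-k}(q)-B_{n,n-k-1}(q)=-q^{n^{2}}\bigl(B_{n,k+1}(1/q)-B_{n,k}(1/q)\bigr),$$
which is negative for $0<q<1$ by the first part applied at $1/q>1$.

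The main obstacle I anticipate is the algebraic rearrangement of $B_{n,k+1}(q)-B_{n,k}(q)$: one must carefully combine the $q^{2}$-binomials and the powers of $q$ in the two sums so as to expose a common positive multiplier of each $b_{n,s}(q)$. Once this factorisation is in hand, the proof reduces to the elementary $q$-integer estimate above together with the palindromic symmetry.
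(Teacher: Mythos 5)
Your proposal is correct and follows essentially the same route as the paper: the same inversion formula \eqref{eq:qB}, the same decomposition of $B_{n,k+1}(q)-B_{n,k}(q)$ isolating $b_{n,k+1}(q)$ plus a sum with a nonnegative bracketed coefficient (your factorisation via the $q^2$-Pascal identity is algebraically identical to the paper's), and the same symmetry $B_{n,n-k}(q)=q^{n^2}B_{n,k}(1/q)$ for the case $0<q<1$. The only difference is cosmetic: you verify positivity of the bracket by the direct comparison $q^{2k+1}[n-k-s]_{q^2}>[k+1-s]_{q^2}$ (using $n-k-s>k+1-s$), which is a slightly cleaner check than the paper's rewriting of the bracket as an explicit rational function followed by a chain of inequalities on its numerator.
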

\begin{proof}
We start from \eqref{eq:qB}, which can be rewritten
$$ B_{n,k}(q)=\sum_{s=0}^k {n-2s\brack k-s}_{q^2} q^{k^2-s^2} b_{n,s}(q),$$
for $k=0,\dots,n$, where we assume $b_{n,s}(q)=0$ for $s>j$. Thus we can write for $k=0,\dots,j-1$:
\begin{multline*}
B_{n,k+1}(q)-B_{n,k}(q)=b_{n,k+1}(q)\\
+\sum_{s=0}^k {n-2s\brack k+1-s}_{q^2} q^{(k+1)^2-s^2} b_{n,s}(q)\left(1-q^{-2k-1}\frac{1-q^{2(k+1-s)}}{1-q^{2(n-k-s)}}\right).
\end{multline*}
We know that the $q$-binomial coefficient is a polynomial in $q$ with nonnegative integer coefficients, and from Theorem~\ref{th:B} that this is also true for $b_{n,s}(q)$, $s=0,\dots,k+1$.
Therefore it is enough to show that the coefficient between brackets is nonegative for $0\leq s\leq k\leq j-1$. This coefficient can be rewritten as:
$$\frac{q^{2n}-q^{2s+2k}+q^{2s-1}-q^{2k+1}}{q^{2n}-q^{2k+2s}}.$$
Assume first that $q>1$. As $k+s\leq 2j-2\leq n-2<n$, the denominator of this fraction is positive. Moreover, it is not difficult to see that under the conditions $0\leq s\leq k\leq j-1$, and by using $n/2-1\leq j\leq n/2$, we have the following inequalities:
\begin{eqnarray*}
q^{2n}-q^{2s+2k}+q^{2s-1}-q^{2k+1}&\geq& q^{2n}-q^{4k}+q^{2k-1}-q^{2k+1}\\
&\geq&q^{2n}-q^{4j-4}+q^{2j-3}-q^{2j-1}\\
&\geq& q^{2n}-q^{2n-4}+q^{n-5}-q^{n-1}.
\end{eqnarray*}
This last expression can be rewritten $(q^{2n}-q^{n-1})(1-q^{-4})$ and is nonnegative, which shows that $B_{n,k+1}(q)\geq B_{n,k}(q)$ for $k=0,\dots,j-1$. \\

In the case $0<q<1$, we only need to use the well-known relation $B_{n,n-k}(q)=q^{n^2}B_{n,k}(1/q)$ for any $k=0,\dots,n$, and the result is obvious from the case $q>1$. 
\end{proof}


\section{An  open problem on the combinatorial interpretations}

By  Theorems  1 and 5,   the polynomials  $a_{n,k}(q)$ and $b_{n,k}(q)$ have positive integral coefficients.  It is  then natural to ask the following question.
 \begin{Prob}\label{prob5}   What are the  combinatorial interpretations for $a_{n,k}(q)$  and $b_{n,k}(q)$?
 \end{Prob}

We  can give a  combinatorial interpretation for the \emph{odd central terms}
$a_{2n+1, n+1}(q)$ by using the {\it doubloon} model.
Recall  \cite{FH09} that a {\it doubloon} of order $(2n+1)$ is defined to be a permutation of the word $012\cdots (2n+1)$, represented as a
$2\times (n+1)$-matrix 
$\delta={a_0\,\cdots \,a_n\choose b_0\,\cdots\,b_n}$. 
Define 
$$
{\rm cmaj'}\, \delta:={\rm maj}(a_0\cdots a_nb_n\cdots b_0) 
- (n+1) {\rm des}(a_0\cdots a_nb_n\cdots b_0) +n^2,
$$ 
where ``des" and ``maj" are the usual {\it number of descents} and 
{\it major index} defined for words.
%
A doubloon $\delta={a_0\,\cdots \,a_n\choose
b_0\,\cdots\,b_n}$ is said to be {\it
interlaced}, if for every $k=1,2,\ldots,
n$ the sequence $(a_{k-1},a_k,b_{k-1},b_k)$ or one of its three
{\it cyclic rearrangements} is monotonic increasing or
decreasing. 
%
%
By Theorem~1.5 in \cite{FH09} we have the following result.
\begin{prop}\label{th:middle_a}
The polynomial $a_{2n+1, n+1}(q)$ is the generating function for the set of interlaced doubloons of order $2n+1$ by the statistic ${\rm cmaj}'$.
\end{prop}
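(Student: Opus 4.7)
The plan is to reduce Proposition~\ref{th:middle_a} to an identity already established in Section~2 together with the combinatorial content of \cite[Thm.~1.5]{FH09}. From the Corollary following Theorem~\ref{th:A}, we proved that $T_{2n+1}(q)=a^*_{2n+1,n+1}(q)$, and recalling the definition $a^*_{n,k}(q):=q^{-k(k+1)/2}a_{n,k}(q)$ this yields the algebraic identity
$$a_{2n+1,n+1}(q) = q^{(n+1)(n+2)/2}\,T_{2n+1}(q).$$
So the problem is reduced to transporting the combinatorial description of $T_{2n+1}(q)$ across this simple power-of-$q$ shift.

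Next, Theorem~1.5 of \cite{FH09} expresses $T_{2n+1}(q)$ as the generating polynomial over the set of interlaced doubloons $\delta$ of order $2n+1$ with respect to a cyclic major index type statistic (essentially the ingredient ${\rm maj}(a_0\cdots a_n b_n\cdots b_0)-(n+1){\rm des}(a_0\cdots a_n b_n\cdots b_0)$ appearing in the definition of $\text{cmaj}'$, up to an additive constant). Substituting this into the displayed identity above gives
$$a_{2n+1,n+1}(q) = \sum_{\delta} q^{\mathrm{stat}(\delta) + (n+1)(n+2)/2},$$
where the sum runs over interlaced doubloons of order $2n+1$ and $\mathrm{stat}$ denotes the statistic used in \cite{FH09}.

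It then only remains to verify that $\mathrm{stat}(\delta)+(n+1)(n+2)/2 = \text{cmaj}'(\delta)$ for every interlaced doubloon $\delta$. This is a direct comparison of conventions: the constant $+n^2$ in the definition of $\text{cmaj}'$ (together with the coefficient $-(n+1)$ of the descent number and the reading order of the doubled word $a_0\cdots a_n b_n\cdots b_0$) is arranged precisely so as to absorb the normalizing shift $q^{(n+1)(n+2)/2}$ coming from the passage $a^*\mapsto a$.

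The main (and essentially only) obstacle is this final bookkeeping step, that is, tracking the precise normalization used in \cite{FH09} for $T_{2n+1}(q)$ and verifying that the discrepancy is exactly $(n+1)(n+2)/2$; no new recursion or bijection is required.
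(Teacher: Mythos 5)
Your overall strategy --- identify $a_{2n+1,n+1}(q)$ with an explicit power of $q$ times $T_{2n+1}(q)$ and then invoke Theorem~1.5 of \cite{FH09} --- is exactly the route the paper takes (its entire proof is the one sentence citing that theorem), so the plan is sound. The problem is that the one quantitative ingredient you commit to is wrong, and it is precisely the ingredient your last paragraph defers as ``bookkeeping''. The identity $a_{2n+1,n+1}(q)=q^{(n+1)(n+2)/2}\,T_{2n+1}(q)$ is false: already for $n=1$ one has $a_{3,2}(q)=q+q^{2}=q\,T_{3}(q)$, not $q^{3}\,T_{3}(q)$. You inherited this from a slip in the paper's own Corollary (the normalization there should read $a_{n,k}^{*}(q)=q^{-k(k-1)/2}a_{n,k}(q)$; with the printed exponent $-k(k+1)/2$ the displayed recurrence for $a_{n,k}^{*}$ does not actually follow from \eqref{eq:Han}). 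The correct relation comes directly from \eqref{eq:Chapo} and \eqref{eq:defT}: setting $t=-q^{-n}$ in \eqref{eq:Chapo} kills every term except $k=n+1$, giving $a_{2n+1,n+1}(q)=(-1)^{n}q^{n^{2}}A_{2n+1}(-q^{-n},q)=q^{n(n+1)/2}\,T_{2n+1}(q)$, so the shift is $\binom{n+1}{2}$, not $\binom{n+2}{2}$.

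This matters because the final comparison of constants is the only nontrivial content of the argument, and with your shift it cannot close: the constant $+n^{2}$ in $\mathrm{cmaj}'$ is calibrated so that the discrepancy with the Foata--Han statistic equals $n^{2}-\binom{n}{2}=\binom{n+1}{2}$, consistent with the corrected shift above, whereas you would need it to equal $(n+1)(n+2)/2$. Since you neither carry out that verification nor have the right constant to carry it out with, this is a genuine gap --- though an easily repaired one: replace $(n+1)(n+2)/2$ by $n(n+1)/2$ throughout (deriving it from \eqref{eq:Chapo} rather than from the misstated $a^{*}$ normalization) and then actually check the additive constant against the precise statement of \cite[Theorem~1.5]{FH09} rather than asserting that it works out.
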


Another  sequence of $q$-secant numbers is introduced in \cite{FH10} by 
$$
E_{2n}(q)=(-1)^n q^{n^2} B_{2n}(-q^{-2n}, q).
$$
Unfortunately, it seems not easy to relate our coefficients $b_{n,k}(q)$ from Section~3 to the doubloons of  type $B$, even for the central cases.

\section*{Acknowledgement}
This work was  partially supported by 
 the grant ANR-08-BLAN-0243-03.  The second author is grateful to Fr\'ed\'eric Chapoton for several  
  discussions at the initial stage  of this work.

\end{document}